\newtheorem{theorem}{Theorem}
\newtheorem{lemma}{Lemma}
\newtheorem{definition}{Definition}
\newtheorem{example}{Example}
\tikzset{
block/.style = {draw, rectangle,
	minimum height=0.5cm,
	minimum width=1cm},
input/.style = {coordinate,node distance=1cm},
output/.style = {coordinate,node distance=4cm},
arrow/.style={draw, -latex,node distance=2cm},
pinstyle/.style = {pin edge={latex-, black,node distance=2cm}},
sum/.style = {draw, circle, node distance=1cm},
}
\DeclareMathOperator{\diag}{diag}
\begin{document}

\title{Convergence Analysis of Distributed Optimization: \\A Dissipativity Framework
\\
}

\author{A. Karakai, J. Eising, A. Martinelli, F. Dörfler%
\thanks{A. Karakai, A. Martinelli, and F. Dörfler are with the Automatic Control Laboratory, ETH Zürich, Switzerland, {\tt\footnotesize \{akarakai, andremar, doerfler\}@ethz.ch}. J. Eising is with ENTEG, University of Groningen, the Netherlands,  {\tt\footnotesize j.eising@rug.nl}. This work was supported by the SNF/FW Weave Project 200021E\_20397.}}

\maketitle

\begin{abstract}
    We develop a system-theoretic framework for the structured analysis of distributed optimization algorithms with decomposable cost functions. We model such algorithms as a network of interacting dynamical systems and derive tests for convergence based on incremental dissipativity and contraction theory. This approach yields a step-by-step analysis pipeline suitable for any network structure, with conditions expressed as linear matrix inequalities. In addition, a numerical comparison with traditional analysis methods is presented, in the context of distributed gradient descent.
\end{abstract}

\section{Introduction}

The use of system-theoretic tools for the analysis and design of numerical algorithms has received significant research attention in recent years. As argued in \cite{Dörfler_He_Belgioioso_Bolognani_Lygeros_Muehlebach_2024}, the increasing prevalence of such algorithms in engineering systems requires a bridge between the analysis tools used in both domains. A prominent place in this is taken by optimization algorithms in closed-loop control, such as in model predictive control \cite{Rawlings2009-md} or online feedback optimization \cite{Colombino_Dall’Anese_Bernstein_2020,Hauswirth_He_Bolognani_Hug_Dörfler_2024}.

In this paper, we take this system-theoretic view in order to analyze distributed, or interconnected, optimization algorithms. Distributed optimization problems arise in a wide range of scenarios, including multi-agent control \cite{Molzahn_Dorfler_Sandberg_Low_Chakrabarti_Baldick_Lavaei_2017}, sensor networks \cite{Rabbat_Nowak_2004}, and privacy-preserving machine learning \cite{Li_Sahu_Talwalkar_Smith_2020}. In all of these, agents aim to optimize a shared global objective, relying solely on local computation while communication is restricted to a given network. For an overview of distributed optimization, see \cite{Yang_Yi_Wu_Yuan_Wu_Meng_Hong_Wang_Lin_Johansson_2019} and the references therein. Indeed, system-theoretic tools have proven useful in analyzing distributed optimization schemes. For instance, \cite{Notarstefano_2024} discusses how singular perturbation theory can be used for the analysis and design of distributed algorithms.

One of the main tools from systems theory used to analyze optimization algorithms is dissipativity theory \cite{Willems_1972a, Willems_1972b}. In particular, \cite{Lessard_2022} sets out a dissipativity-based approach for the convergence analysis of single optimization algorithms. The core idea there is to decompose such an algorithm into a feedback loop of a linear time-invariant (LTI) system and a static nonlinearity, such as a gradient operator. Monotone operator theory allows us to derive appropriate incremental bounds, called \textit{sector bounds}, on the latter from common assumptions on the optimization problem. Convergence is then verified by proving that the LTI system is asymptotically stable under any feedback that satisfies the sector bound. That is, convergence analysis reduces to the classical \textit{problem of absolute stability} \cite{Lure_Postnikov}. A parallel approach instead employs integral quadratic constraints \cite{Scherer_Ebenbauer_2021} to analyze the closed loop and to synthesize algorithms. 

Dissipativity theory places the analysis of systems in the context of interaction with the environment. Building on that, in \cite{onlineopt}, it was shown useful in analyzing robustness and performance in interconnections of an optimization algorithm with a plant. More generally, by characterizing system behaviour in terms of (abstract) energy exchange with the environment, it provides a powerful framework for analysis and design in a networked setting \cite{Arcak_2016, van_der_Schaft_2017, Aboudonia_Martinelli_Lygeros_2021, Martinelli_QSR, Nakano_Aboudonia_Eising_Martinelli_Dörfler_Lygeros_2025}. 

This observation, combined with the successful application of the theory to individual optimization algorithms, motivates a system-theoretic approach to distributed optimization algorithms based on dissipativity theory.

To be precise, we propose a system-theoretic framework for the analysis of distributed optimization algorithms through the incremental form of dissipativity theory \cite{Sepulchre_Chaffey_Forni_2022}. We model a distributed algorithm as a set of \textit{local} optimization algorithms interconnected over a network. Our aim is to guarantee convergence by proving that the network interconnection is \textit{contractive}, that is, the distance between pairs of trajectories shrinks over time (see e.g. \cite{Bullo_2022}). 

\subsection*{Contribution}

Our proposed framework provides a structured and systematic pipeline for analysis. In addition, it can handle any network topology, as well as heterogeneous distributed algorithms.
This framework
\begin{enumerate}
    \item yields semidefinite programs to verify (exponential) contraction of arbitrary linear interconnections of optimization algorithms,
    \item naturally links to monotone operator theory by employing the incremental form of dissipativity, and 
    \item places distributed optimization algorithms into the broader context of interconnected dynamical systems, allowing us to repurpose system-theoretic tools to analyze algorithms.
\end{enumerate}

\subsection*{Notation}
We use $I_n\in\mathbb{R}^{n\times n}$ to denote the identity matrix in dimension $n$. Dimensions are not written when they are clear from the context. When discussing discrete-time dynamical systems or iterative algorithms, $x^+$ and $x$ are to be understood as $x(k+1)$ and $x(k)$, respectively. The Euclidean norm is written as $\norm{\cdot}_2$. For symmetric matrices, the symbols $\prec$  ($\preceq$) and $\succ$ ($\succeq$) denote inequality in the (semi-) definite sense.

\section{Distributed Optimization}\label{sec:distr-opt}

In this paper, we consider decomposable optimization problems of the form
\begin{equation}\label{eq:opt-problem-global}
    \min_{x\in\mathbb{R}^n}\quad \sum_{i=1}^N f_i(x),
\end{equation}
where each $f_i:\mathbb{R}^n\to\mathbb{R}$ is a \textit{local} cost function that belongs to an agent in a network. In distributed optimization, the goal is to solve this \textit{global} problem while  performing all computations related to $f_i$, such as evaluations of the gradient $\nabla f_i$, at agent $i$ and restricting communication to a given network. A common approach is to reformulate \eqref{eq:opt-problem-global} as
\begin{equation}\label{eq:opt-problem-distributed}
    \begin{split}
        &\min_{x_1, x_2,\dots, x_N \in\mathbb{R}^{n}}\quad \sum_{i=1}^N f_i(x_i)\\
        &\textrm{subject to}\quad  x_1=x_2=\dots=x_N.
    \end{split}
\end{equation}

After this, \eqref{eq:opt-problem-distributed} is typically solved by iterative algorithms that combine updates aimed at minimizing $f_i$ with ones that asymptotically enforce the consensus constraint $x_1=x_2=\dots=x_N$. That is, the problem is decoupled into local optimization and distributed consensus estimation. An elaborate discussion of this idea from a system-theoretic viewpoint is given in \cite{Notarstefano_2024}.
In this paper, we consider algorithms where updates are linear combinations of optimization and consensus directions, based on communication through a network.
Examples of such methods include distributed gradient descent \cite{Yuan_Ling_Yin_2016} and EXTRA \cite{Shi_Ling_Wu_Yin_2015}.

\subsection{System-Theoretic Modelling}\label{sec:network-system-formulation}

The class of optimization algorithms we consider is inspired by \cite{Lessard_2022,onlineopt, Scherer_Ebenbauer_2021} and is introduced next. In line with those works, we consider agents modeled as discrete-time dynamical systems with inputs $u_i$ and outputs $y_i$. Adopting the standard notation of $u=[u_1^\top \hspace{0.5em} u_2^\top \ldots u_N^\top]^\top$, and similar for other signals, we model the communication between agents as a linear relation $u=My$, where $M$ can be, for example, the Laplacian or adjacency matrix of the (directed or undirected) communication graph.

The autonomous dynamics of the agents, that is, for $u=0$, constitute optimization algorithms. To model that, we follow the ideas of \cite{Lessard_2022} and write the optimization dynamics as a feedback loop between an LTI system and a static nonlinearity. The latter is denoted by $\varphi_i$ and referred to as an \textit{oracle}. It can be, for example, a gradient or a proximal operator. It is shown in \cite{Lessard_2022} that a wide range of common optimization algorithms, such as ADMM or Nesterov's accelerated method, can be cast in this form, and a number of case studies are presented there.

In particular, we consider local dynamics of the form
\begin{subequations}\label{eq:sigma}
\begin{align}
    \Sigma_i&{\footnotesize:\begin{cases}
        x_i^+ = A_i x_i + B_i u_i + G_i w_i,\\
        y_i = C^{\textrm{con}}_i x_i + D^{\textrm{con}}_i u_i + H^{\textrm{con}}_i w_i,\\
        z_i = C^{\textrm{opt}}_i x_i + D^{\textrm{opt}}_i u_i + H^{\textrm{opt}}_i w_i,
    \end{cases}} \label{eq:sigma-tilde-LTI}\\
    & \quad\,\,\,\, {\footnotesize w_i = \varphi_i(z_i),} \label{eq:sigma-tilde-oracle}
\end{align} \end{subequations}
for $i=1,2,\dots, N$. We refer to $u_i$ and $y_i$ as the \textit{interconnection input} and \textit{output}, and to $w_i$ and $z_i$ as the \textit{oracle input} and \textit{output}.
Figure~\ref{fig:network-block-diagram} illustrates this structure. We assume that the network interconnection $M$ and the feedthrough terms are such that the interconnection is well-posed.

\begin{example}\label{ex:DGD}
Suppose we have a scalar optimization problem, that is, $n=1$. Then, distributed gradient descent (see, e.g., \cite{Yuan_Ling_Yin_2016}) is traditionally given by the iterations
\begin{equation}\label{eq:dgd}
    x_i^+ = x_i - \eta \nabla f_i(x_i) - \rho (L x)_i
\end{equation}
for $i=1,2,\dots, N$, where $\eta, \rho > 0$, $L$ is the Laplacian of the connected and undirected communication graph, and $(Lx)_i$ is component $i$ of $Lx\in\mathbb{R}^N$. We can write \eqref{eq:dgd} as a network of optimization algorithms in the form \eqref{eq:sigma} as
\begin{equation}\label{eq:dgd-lti}
    {\small\Sigma_i:\begin{cases}
        x_i^+ = x_i - \rho u_i - \eta w_i,\\
        y_i = x_i,\\
        z_i = x_i,
    \end{cases}}
\end{equation}
with feedback $w_i = \nabla f_i(z_i)$ and coupling $u = Ly$. It is straightforward to extend this to the $n>1$ case. \hfill $\square$
\end{example}

\begin{figure}
    \centering
    \vspace{2mm}
    \input{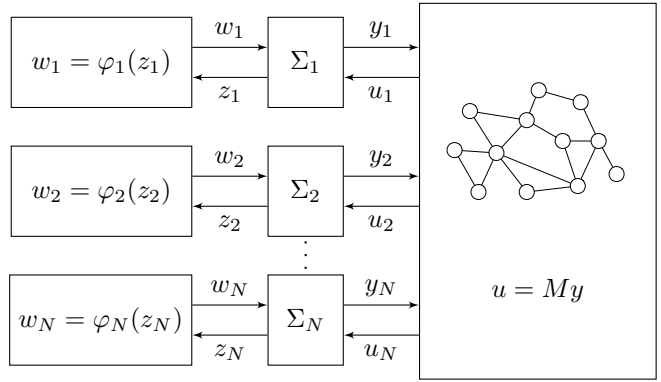}
    \caption{Block diagram of a distributed optimization algorithm modeled as a collection of LTI systems $\Sigma_i$ in feedback interconnection with their oracles $\varphi_i$, coupled via $u=My$. \vspace{-1.4em}}
    \label{fig:network-block-diagram}
\end{figure}

\subsection{Problem Formulation}

We are interested in proving the convergence of such distributed algorithms. In the language of dynamical systems, that translates to the existence of a fixed point and asymptotic stability of the system. Namely, the interconnected system should have a unique  fixed point that corresponds to the solution of the optimization problem. Satisfaction of the consensus constraint requires $y^*_1=y^*_2=\dots = y^*_N$, while optimality conditions depend on the choice of oracles. As an important example, for gradient-based methods with convex cost functions, $w^*_1 + w^*_2 + \dots + w^*_N = 0$
guarantees optimality at the fixed point. In the sequel, we assume that the algorithms at hand have the correct fixed points.

We view the problem of convergence through the lens of contraction theory, that is, by investigating the evolution of the distance between pairs of trajectories. To this end, we introduce the notation $\Delta x = x^1 - x^2$ for the difference between two trajectories and call $\Delta x$ an \textit{incremental trajectory}. We consider the following notion of contraction. 
\begin{definition}
    Let $\gamma \in (0,1)$. An autonomous discrete-time dynamical system $\Sigma$ with state $x\in\mathbb{R}^n$ is said to be nonexpansive, contractive, or exponentially contractive if there exists a positive definite quadratic function $V:\mathbb{R}^n \to \mathbb{R}$ such that, for any incremental trajectory $\Delta x \neq 0$, we have
    \begin{align*}
            V(\Delta x^+) - V(\Delta x) &\leq 0,\quad \textrm{or}\\ 
            V(\Delta x^+) - V(\Delta x) &< 0,\quad \textrm{or}\\ 
            V(\Delta x^+) - \gamma V(\Delta x) &\leq 0,
        \end{align*}
    respectively. The function $V$ is called a contraction metric, and $\gamma$ is called a contraction rate.
\end{definition}

For a comprehensive overview of contraction theory, including more general contraction metrics, see \cite{Bullo_2022}. A particularly important observation is that if a system has a fixed point, then (exponential) contraction gives asymptotic stability. Moreover, it follows that the fixed point is unique. Exponential contraction with rate $\gamma$ also implies that the worst-case error decreases proportionally to $\gamma^k$. This is often called \textit{linear convergence} in the optimization literature. 

In what follows, we develop an approach to check whether the interconnection of systems of the form \eqref{eq:sigma} coupled by $u=My$ is contractive for a specific class of oracles.

\section{Incremental Dissipativity Approach}
\subsection{Incremental Dissipativity}

Recall that the oracles $\varphi_i$ are related to the local cost functions $f_i$. Since the design of optimization algorithms should not be limited to a single problem, we aim to show convergence for relevant classes of cost functions and corresponding oracles. Some relevant properties are the following:

\begin{definition}\label{def:convex}
    Let $\mu, K >0$. A function $f:\mathbb{R}^n\to \mathbb{R}$ is called $\mu$-strongly convex if, for all $x_1,x_2\in\mathbb{R}^n$ and $\alpha\in(0,1)$, we have
    \begin{align*}
        f(\alpha x_1 + (1-\alpha) x_2) & \leq \alpha f(x_1) + (1-\alpha) f(x_2)\\
        & \quad -\alpha(1-\alpha)\frac{\mu}{2} \norm{x_1-x_2}_2^2.
    \end{align*}
    Moreover, a function $\varphi:\mathbb{R}^n\to \mathbb{R}^n$ is called $\mu$-strongly monotone if
    \begin{equation*}
        (\varphi(x_1) - \varphi(x_2))^\top (x_1-x_2) \geq \mu \norm{x_1-x_2}_2^2
    \end{equation*}
    and $K$-Lipschitz if
    \begin{equation*}
        \norm{\varphi(x_1)-\varphi(x_2)}_2^2 \leq K^2 \norm{x_1 - x_2}_2^2
    \end{equation*}
    for all $x_1, x_2 \in \mathbb{R}^n$.
\end{definition}

These properties, and others derived from monotone operator theory \cite{Bauschke_Combettes_2017}, can be expressed as incremental bounds of the form
\begin{equation}\label{eq:incr-bound}
    s_{\varphi_i}(\Delta z_i, \Delta w_i):=
    \begin{bmatrix}
        \Delta z_i \\ \Delta w_i
    \end{bmatrix}^\top S_{\varphi_i} \begin{bmatrix}
        \Delta z_i \\ \Delta w_i
    \end{bmatrix} \leq 0,
\end{equation}
which holds for all $\Delta z_i$ and corresponding $\Delta w_i$, with an appropriate matrix $S_{\varphi_i}$. This is called a \textit{sector bound} in terms of the increments $\Delta w_i$ and $\Delta z_i$.

In particular, it is well known that the gradient of a $\mu$-strongly convex differentiable function is $\mu$-strongly monotone. Then, for gradient-based methods, where $\varphi_i = \nabla f_i$, a $\mu$-strongly convex cost function $f_i$ implies a sector bound given by 
\begin{equation}\label{eq:sector-bound-monotone}
    S_{\textrm{mon}} = \begin{bmatrix}
        2\mu I & -I\\
        -I & 0
    \end{bmatrix},
\end{equation}
which follows immediately from Definition~\ref{def:convex}. A $K$-Lipschitz oracle satisfies a similar quadratic sector bound.
Several further examples with different oracles are given in \cite{Lessard_2022}. If an oracle satisfies several sector bounds, it also satisfies any conic combination of them, allowing us to exploit more than one known property simultaneously.

Given sector bounds on the oracles $\varphi_i$, our goal is to prove that the interconnection of $\Sigma_i$, $i=1,2,\dots,N$, is contractive under any nonlinear feedback at each $\Sigma_i$ that satisfies the corresponding sector bound. 
To tackle this problem, the notion of contraction needs to be extended to systems with inputs and outputs. This leads to the notion of incremental dissipativity \cite{Sepulchre_Chaffey_Forni_2022}, analogously to how dissipativity theory \cite{Willems_1972a, Willems_1972b} generalises Lyapunov theory to open systems.
\begin{definition}
    Consider a discrete-time dynamical system $\Sigma$ with state $x\in\mathbb{R}^n$, input $u\in\mathbb{R}^m$, and output $y\in\mathbb{R}^p$, and a supply rate $s:\mathbb{R}^{p}\times\mathbb{R}^m\to \mathbb{R}$. The system $\Sigma$ is said to be
    incrementally dissipative with respect to $s$, with dissipation rate $\gamma\in(0,1]$, if there exists a positive definite function $V:\mathbb{R}^n\to\mathbb{R}$ such that
    \begin{equation*}
            %V(\Delta x^+) - V(\Delta x) \leq s(\Delta y, \Delta u)
            V(\Delta x^+) - \gamma V(\Delta x) \leq s(\Delta y, \Delta u)
    \end{equation*}
    for all incremental trajectories $\Delta u, \Delta x, \Delta y$.
\end{definition}

In \cite{Willems_1972b} it is shown that an LTI system is dissipative with respect to a quadratic supply rate $s$ if and only if it is dissipative with respect to $s$ with a quadratic storage function $V(x)=x^\top P x$, where $P\succ 0$, and this result carries over to incremental dissipativity \cite{onlineopt}. Since we consider only $\Sigma_i$ which are LTI systems and we work with quadratic supply rates in the sequel, we restrict our attention to positive definite quadratic storage functions.

\subsection{Analysis of Distributed Algorithms}

We aim to show that the interconnection described in Section~\ref{sec:network-system-formulation} is contractive. In the minimal case of $N=1$, we would seek incremental dissipativity of the LTI system $\Sigma_1$ with respect to the supply rate given by a sector bound $s_{\varphi_1}$ of the form \eqref{eq:incr-bound}, following the ideas of \cite{Lessard_2022}. Then, the nonpositive supply rate would imply that the closed loop system is at least nonexpansive.

In the general setting, we show contraction by, on the one hand, verifying that each $\Sigma_i$ is incrementally dissipative with respect to a local supply rate that quantifies both the oracle feedback and the network interconnection, and, on the other hand, checking that the sum of these supply rates over the network is negative.
In particular, we are interested in local supply rates of the form
\begin{align}\label{eq:local-supply-rate}
    &s_i(\Delta z_i, \Delta w_i, \Delta y_i, \Delta u_i) \nonumber \\
    &= \alpha_i s_{\varphi_i} (\Delta z_i, \Delta w_i) + s_{\textup{ext},i}(\Delta y_i, \Delta u_i),
\end{align}
where $\alpha_i>0$, $s_{\varphi_i}$ is as in \eqref{eq:incr-bound}, and
\begin{equation}\label{eq:interconnection-supply-rate}
    s_{\textup{ext},i}(\Delta y_i, \Delta u_i) = \begin{bmatrix}
            \Delta y_i \\ \Delta u_i
        \end{bmatrix}^\top \begin{bmatrix}
            Q_i & S_i\\
            S_i^\top & R_i
        \end{bmatrix}\begin{bmatrix}
            \Delta y_i \\ \Delta u_i
        \end{bmatrix}
\end{equation}
with matrices $Q_i, S_i, R_i$ that are parameters used to capture interaction over the network. We require $Q_i^\top = Q_i$ and $R_i^\top = R_i$. For the sake of conciseness, we assume here that only one sector bound is used, but our arguments can be extended by replacing the first term in \eqref{eq:local-supply-rate} with a free conic combination of several sector bounds.
Note that \eqref{eq:local-supply-rate} is a quadratic supply rate. Consequently, we can test for incremental dissipativity through linear matrix inequalities (LMIs), as stated in Lemma~\ref{lem:network-diss-LMI}.

\begin{lemma}\label{lem:network-diss-LMI}
    Given $Q_i, S_i, R_i$ and $\alpha_i>0$, the system \eqref{eq:sigma-tilde-LTI} is incrementally dissipative with respect to $s_i$ as in \eqref{eq:local-supply-rate} with dissipation rate $\gamma\in(0,1]$ and storage function $V_i:\Delta x_i\mapsto \Delta x_i^\top P_i \Delta x_i$, $P_i\succ 0$, if and only if

    \begin{align}
        &\setlength\arraycolsep{3.5pt} \begin{bmatrix}
            A_i & B_i & G_i\\
            I & 0 & 0
        \end{bmatrix}^\top \begin{bmatrix}
            -P_i & 0\\
            0 & \gamma P_i
        \end{bmatrix}\begin{bmatrix}
            A_i & B_i & G_i\\
            I & 0 & 0
        \end{bmatrix} \nonumber\\
        & \setlength\arraycolsep{3.5pt} + \alpha_i \begin{bmatrix}
            C_i^{\textrm{opt}} & D_i^{\textrm{opt}} & H_i^{\textrm{opt}}\\
            0 & 0 & I
        \end{bmatrix}^\top S_{\varphi_i} \begin{bmatrix}
            C_i^{\textrm{opt}} & D_i^{\textrm{opt}} & H_i^{\textrm{opt}}\\
            0 & 0 & I
        \end{bmatrix} \label{eq:local-dissipation-lmi}\\
        & \setlength\arraycolsep{3.5pt} +  \begin{bmatrix}
            C_i^{\textrm{con}} & D_i^{\textrm{con}} & H_i^{\textrm{con}}\\
            0 & I & 0
        \end{bmatrix}^\top \begin{bmatrix}
            Q_i & S_i\\
            S_i^\top & R_i
        \end{bmatrix}
        \begin{bmatrix}
            C_i^{\textrm{con}} & D_i^{\textrm{con}} & H_i^{\textrm{con}}\\
            0 & I & 0
        \end{bmatrix}\succeq 0. \nonumber%\label{eq:oracle-bound-QSR-LMI}
    \end{align}
\end{lemma}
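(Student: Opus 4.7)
The proof is a standard application of the dissipativity-to-LMI equivalence for LTI systems, adapted to the incremental setting. The plan is as follows.

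First, I would exploit that \eqref{eq:sigma-tilde-LTI} is LTI: any incremental trajectory satisfies the same linear relations as the original one. Stacking $\xi_i := [\Delta x_i^\top \; \Delta u_i^\top \; \Delta w_i^\top]^\top$, I can write
\begin{equation*}
    \Delta x_i^+ = \begin{bmatrix} A_i & B_i & G_i \end{bmatrix} \xi_i, \qquad \Delta x_i = \begin{bmatrix} I & 0 & 0 \end{bmatrix} \xi_i,
\end{equation*}
and analogously express $\Delta y_i$ and $\Delta z_i$ as linear functions of $\xi_i$ using the output rows of \eqref{eq:sigma-tilde-LTI}. With $V_i(\Delta x_i) = \Delta x_i^\top P_i \Delta x_i$, substitute these expressions into $\gamma V_i(\Delta x_i) - V_i(\Delta x_i^+)$ and into each of the two pieces of $s_i$ in \eqref{eq:local-supply-rate}. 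A direct computation shows that these three quadratic forms are precisely $\xi_i^\top$ times the three summands on the left-hand side of \eqref{eq:local-dissipation-lmi}.

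Summing these identities yields
\begin{equation*}
    s_i - V_i(\Delta x_i^+) + \gamma V_i(\Delta x_i) \;=\; \xi_i^\top \Pi_i \xi_i,
\end{equation*}
where $\Pi_i$ is the symmetric matrix on the left-hand side of \eqref{eq:local-dissipation-lmi}. By definition, incremental dissipativity demands that this expression be nonnegative for every incremental trajectory. The \emph{if} direction is then immediate from $\Pi_i \succeq 0$. For the \emph{only if} direction, I need that the map from pairs of trajectories to triples $(\Delta x_i, \Delta u_i, \Delta w_i)$ is surjective onto $\mathbb{R}^{n_i+m_i+p_i}$; this holds because $u_i$ and $w_i$ are treated as free inputs to the LTI system \eqref{eq:sigma-tilde-LTI}, so any prescribed triple can be realised by choosing suitable initial states and single-step inputs for the two trajectories. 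Hence nonnegativity over all $\xi_i$ is equivalent to $\Pi_i \succeq 0$.

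I do not anticipate a genuine obstacle: once the stacking $\xi_i$ is fixed, the computation is bookkeeping. The only points that warrant care are the sign convention (the LMI is written with $\succeq 0$ because the dissipation inequality bounds $V_i(\Delta x_i^+) - \gamma V_i(\Delta x_i)$ from above by $s_i$, so the LMI corresponds to the negated quadratic form), the fact that positive definiteness of the storage is enforced separately through $P_i \succ 0$ rather than absorbed into \eqref{eq:local-dissipation-lmi}, and verifying that $\Delta w_i$ is legitimately treated as a free variable here even though physically $w_i = \varphi_i(z_i)$ in \eqref{eq:sigma-tilde-oracle} — justified because the lemma concerns dissipativity of the LTI block alone, with the sector information of $\varphi_i$ entering only via the supply rate.
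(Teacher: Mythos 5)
Your proposal is correct and follows essentially the same route as the paper: pre- and post-multiplying the LMI by the stacked vector $[\Delta x_i^\top\;\Delta u_i^\top\;\Delta w_i^\top]$ gives sufficiency, and necessity follows because $\Delta x_i$, $\Delta u_i$, $\Delta w_i$ can be chosen freely (the lemma concerns the LTI block alone, with $\Delta w_i$ a free input), so nonnegativity of the quadratic form over all such triples is equivalent to $\Pi_i\succeq 0$. Your added remarks on the sign convention and on why $\Delta w_i$ is legitimately unconstrained are exactly the points the paper's terse proof leaves implicit.
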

\vspace{1em}
\begin{proof}
    Sufficiency follows from pre- and postmultiplying \eqref{eq:local-dissipation-lmi} with $[\Delta x_i^\top \hspace{0.5em} \Delta u_i^\top \hspace{0.5em} \Delta w_i^\top]$ and $[\Delta x_i^\top \hspace{0.5em} \Delta u_i^\top \hspace{0.5em} \Delta w_i^\top]^\top$, respectively. To prove necessity, note that the dissipation inequality is required to hold for all initial values of $\Delta x_i$ and all inputs $\Delta u_i$, $\Delta w_i$.
\end{proof}

Lemma~\ref{lem:network-diss-LMI} provides an LMI feasibility condition in decision variables $P_i, Q_i, S_i, R_i$, and $\alpha_i$ for fixed $\gamma$. The smallest feasible $\gamma$ can be found by bisection. In addition, for many algorithms, the matrices in \eqref{eq:sigma-tilde-LTI} have a Kronecker product structure that makes the number of decision variables of the LMIs independent of $n$; see \cite{Lessard_2022} for details.

Suppose that the oracle $\varphi_i$ satisfies the incremental sector bound $s_{\varphi_i}(\Delta z_i, \Delta w_i)$ as in \eqref{eq:incr-bound}. Then, if the local LTI system $\Sigma_i$ is incrementally dissipative with respect to a supply rate of the form \eqref{eq:local-supply-rate}, we can conclude that \eqref{eq:sigma-tilde-LTI} interconnected with the oracle $\eqref{eq:sigma-tilde-oracle}$ is incrementally dissipative with respect to $s_{\textup{ext},i}(\Delta y_i, \Delta u_i)$.
We can now use the ideas of \cite{Martinelli_QSR} to relate local parametric quadratic storage functions to the network structure as follows.

% Theorems 1 and 2
\begin{theorem}\label{thm:general-static-interconnection}
    Consider systems of the form \eqref{eq:sigma} for $i=1,2,\dots, N$, coupled via $u=My$. Assume that
    \begin{enumerate}
        \item each oracle $\varphi_i$ satisfies an incremental bound $s_{\varphi_i}(\Delta z_i, \Delta w_i) \leq 0$, where $s_{\varphi_i}$ is as in \eqref{eq:incr-bound} and
        \item there exist parameters $\alpha_i, Q_i, S_i, R_i$ as in \eqref{eq:local-supply-rate}, and $\gamma\in(0,1)$ such that each $\Sigma_i$ is incrementally dissipative with respect to $s_i$ with dissipation rate $\gamma$.
    \end{enumerate}
    Then, the interconnection is exponentially contractive with rate $\gamma$ if
    \begin{equation}\label{eq:general-interconnection-lmi}
        \begin{bmatrix}
            I \\ M
        \end{bmatrix}^\top \begin{bmatrix}
            Q & S\\
            S^\top & R
        \end{bmatrix}\begin{bmatrix}
            I \\ M
        \end{bmatrix} \preceq 0,
    \end{equation}
    %\raggedright
    $Q = \diag(Q_1, Q_2, \dots, Q_N)$, $S = \diag(S_1, S_2, \dots, S_N)$, and $R=\diag(R_1, R_2, \dots, R_N)$.
\end{theorem}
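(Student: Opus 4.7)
The plan is to construct a global quadratic contraction metric by aggregating the local incremental storage functions guaranteed by assumption (2), and then to let the network LMI \eqref{eq:general-interconnection-lmi} absorb the remaining supply after imposing the coupling $u=My$.

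First, I would invoke assumption (2) together with Lemma~\ref{lem:network-diss-LMI} to obtain, for each $i$, a matrix $P_i \succ 0$ and a local storage function $V_i(\Delta x_i) = \Delta x_i^\top P_i \Delta x_i$ satisfying the incremental dissipation inequality
\begin{equation*}
    V_i(\Delta x_i^+) - \gamma V_i(\Delta x_i) \leq s_i(\Delta z_i, \Delta w_i, \Delta y_i, \Delta u_i)
\end{equation*}
along any incremental trajectory of $\Sigma_i$. By assumption (1) and the positivity $\alpha_i > 0$, the oracle contribution $\alpha_i s_{\varphi_i}(\Delta z_i, \Delta w_i)$ in $s_i$ is nonpositive whenever $\Delta w_i$ arises from the feedback $w_i = \varphi_i(z_i)$. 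Dropping this nonpositive term preserves the inequality and yields the reduced local dissipation relation
\begin{equation*}
    V_i(\Delta x_i^+) - \gamma V_i(\Delta x_i) \leq \begin{bmatrix} \Delta y_i \\ \Delta u_i \end{bmatrix}^\top \begin{bmatrix} Q_i & S_i \\ S_i^\top & R_i \end{bmatrix} \begin{bmatrix} \Delta y_i \\ \Delta u_i \end{bmatrix},
\end{equation*}
which depends only on the interconnection signals.

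Next, I would define the candidate global contraction metric $V(\Delta x) := \sum_{i=1}^N V_i(\Delta x_i) = \Delta x^\top P \Delta x$ with $P = \diag(P_1,\ldots,P_N) \succ 0$; this is automatically a positive definite quadratic function. Summing the reduced inequalities over $i$ and stacking signals as in the notation of the paper gives
\begin{equation*}
    V(\Delta x^+) - \gamma V(\Delta x) \leq \begin{bmatrix} \Delta y \\ \Delta u \end{bmatrix}^\top \begin{bmatrix} Q & S \\ S^\top & R \end{bmatrix} \begin{bmatrix} \Delta y \\ \Delta u \end{bmatrix}.
\end{equation*}
Substituting the network coupling $\Delta u = M\Delta y$ rewrites the right-hand side as
\begin{equation*}
    \Delta y^\top \begin{bmatrix} I \\ M \end{bmatrix}^\top \begin{bmatrix} Q & S \\ S^\top & R \end{bmatrix} \begin{bmatrix} I \\ M \end{bmatrix} \Delta y,
\end{equation*}
which is nonpositive by \eqref{eq:general-interconnection-lmi}. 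Hence $V(\Delta x^+) \leq \gamma V(\Delta x)$ for every incremental trajectory, which is the defining inequality of exponential contraction at rate $\gamma$ with metric $V$.

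The argument is essentially a bookkeeping exercise that chains together the ingredients developed in the preceding subsections: local incremental dissipativity via Lemma~\ref{lem:network-diss-LMI}, elimination of the oracle term via the sector bound of assumption (1), and cancellation of the remaining quadratic supply by the network LMI. I do not anticipate a substantial technical obstacle; the only point needing care is that the coupling $u=My$ induces a well-defined incremental relation $\Delta u = M\Delta y$, which is covered by the standing well-posedness assumption stated in Section~\ref{sec:network-system-formulation}.
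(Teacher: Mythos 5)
Your proposal is correct and follows essentially the same route as the paper's proof: aggregate the quadratic local storage functions into a block-diagonal global metric, sum the local dissipation inequalities, substitute $\Delta u = M\Delta y$, and invoke the sector bounds together with \eqref{eq:general-interconnection-lmi} to make the right-hand side nonpositive. The only (immaterial) difference is that you discard the nonpositive oracle terms before summing, whereas the paper keeps them until the final step.
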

\begin{proof}
    Assume, without loss of generality, that the storage function of each $\Sigma_i$ is quadratic, and denote it by $V_i$. Define a global storage function $V$ by
    \begin{equation}
        V(\Delta x) = \sum_{i=1}^N V_i(\Delta x_i)
    \end{equation}
    and let 
    \begin{equation}
        s_\varphi(\Delta z, \Delta w)=\sum_{i=1}^N \alpha_i s_{\varphi_i} (\Delta z_i, \Delta w_i).
    \end{equation}
    Then, summing over the index $i$ gives
    \begin{align}
        &V(\Delta x^+)-\gamma V(\Delta x)\\
        & \leq  s_\varphi(\Delta z, \Delta w) + \begin{bmatrix}
            \Delta y \\ \Delta u
        \end{bmatrix} ^\top \begin{bmatrix}
            Q & S\\
            S^\top & R
        \end{bmatrix}
        \begin{bmatrix}
            \Delta y \\ \Delta u
        \end{bmatrix}\\
        &= s_\varphi(\Delta z, \Delta w) + \Delta y^\top \begin{bmatrix}
            I \\ M
        \end{bmatrix} ^\top \begin{bmatrix}
            Q & S\\
            S^\top & R
        \end{bmatrix}
        \begin{bmatrix}
            I \\ M
        \end{bmatrix}\Delta y.
    \end{align}
    By \eqref{eq:general-interconnection-lmi} and the sector bounds \eqref{eq:incr-bound}, we obtain exponential contraction with rate $\gamma$.
\end{proof}

Observe that Theorem~\ref{thm:general-static-interconnection} relates incremental dissipativity properties of the individual systems in the network to the interconnection structure. Since the LMI \eqref{eq:general-interconnection-lmi} captures all relevant information related to the interconnection structure, this test can show convergence to a fixed point with any network topology. That said, some assumptions, such as connectedness, may be necessary to ensure that the fixed point indeed solves the optimization problem.

Assuming that a candidate for a distributed optimization algorithm has an appropriate fixed point, the above results provide a step-by-step recipe to verify its convergence:
\begin{enumerate}
    \item Rewrite the algorithm in the form presented in Section~\ref{sec:network-system-formulation}.
    \item Use the properties of the local cost functions $f_i$ and the corresponding oracles used in the algorithm to derive incremental bounds of the form \eqref{eq:incr-bound} for each oracle.
    \item Check for the simultaneous feasibility of \eqref{eq:local-dissipation-lmi} for $i=1,2,\dots, N$ and \eqref{eq:general-interconnection-lmi} with a candidate contraction rate $\gamma\in (0,1)$, using semidefinite programming. Perform bisection to find the smallest admissible $\gamma$.
\end{enumerate}

If this procedure fails with all $\gamma\in(0,1)$, we may still be able to verify contraction with rate $\gamma=1$, or, in optimization terms, sublinear convergence. This is formalized in the following theorem.

\begin{theorem}\label{prop:general-interconnection-weak}
    Consider systems of the form \eqref{eq:sigma} and coupling $u=My$. Assume that
    \begin{enumerate}
        \item each $(C_i^\textrm{opt}, A_i)$ pair is detectable and
        \item each $\varphi_i$ satisfies a sector bound $s_{\varphi_i}(\Delta z_i, \Delta w_i) < 0$ whenever $(\Delta z_i, \Delta w_i) \neq (0,0)$, where $s_{\varphi_i}$ is defined as in \eqref{eq:incr-bound}.
    \end{enumerate}

    If each $\Sigma_i$ is incrementally dissipative with respect to $s_i$ as in \eqref{eq:local-supply-rate} and, in the notation of Theorem~\ref{thm:general-static-interconnection},
    \begin{equation}\label{eq:general-interconnection-lmi-strict}
        \begin{bmatrix}
            I \\ M
        \end{bmatrix}^\top \begin{bmatrix}
            Q & S\\
            S^\top & R
        \end{bmatrix}\begin{bmatrix}
            I \\ M
        \end{bmatrix} \prec 0
    \end{equation}
    holds, then the interconnection is nonexpansive and
    \begin{equation*}
        \lim_{k\to \infty} \Delta x(k)=0.
    \end{equation*}
\end{theorem}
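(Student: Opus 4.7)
The plan is to follow the summing argument used for Theorem~\ref{thm:general-static-interconnection} with $\gamma = 1$, and then add a LaSalle-type step that exploits the strictness of the sector bound, the strict LMI \eqref{eq:general-interconnection-lmi-strict}, and the detectability assumption to upgrade nonexpansiveness to asymptotic convergence of the incremental state.

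First, I would set $V(\Delta x) = \sum_{i=1}^{N} V_i(\Delta x_i)$ with $V_i$ the quadratic storage function of $\Sigma_i$, sum the local dissipation inequalities, and substitute $\Delta u = M \Delta y$ exactly as in the proof of Theorem~\ref{thm:general-static-interconnection}. This yields
\begin{equation*}
V(\Delta x^+) - V(\Delta x) \leq \sum_{i=1}^{N} \alpha_i\, s_{\varphi_i}(\Delta z_i, \Delta w_i) + \Delta y^\top N \Delta y,
\end{equation*}
where $N := \begin{bmatrix} I \\ M \end{bmatrix}^\top \begin{bmatrix} Q & S \\ S^\top & R \end{bmatrix} \begin{bmatrix} I \\ M \end{bmatrix} \prec 0$. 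Both terms on the right are non-positive by the sector bounds and \eqref{eq:general-interconnection-lmi-strict}, so $V$ is non-increasing along every incremental trajectory, which already yields nonexpansiveness.

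For the second claim, since $V \geq 0$ is non-increasing, $V(\Delta x(k))$ converges to some limit and hence $V(\Delta x^+) - V(\Delta x) \to 0$. Because the right-hand side is a sum of two non-positive quantities tending to zero, each must vanish individually: strict negative-definiteness of $N$ forces $\Delta y(k) \to 0$, whence $\Delta u(k) = M \Delta y(k) \to 0$, and simultaneously $s_{\varphi_i}(\Delta z_i(k), \Delta w_i(k)) \to 0$ for every $i$. I would then combine the strict sector bound with the boundedness of $\Delta x(k)$ (guaranteed by the nonexpansiveness already established) and a standard compactness argument to conclude that $(\Delta z_i(k), \Delta w_i(k)) \to 0$: any subsequence along which the pair stayed bounded below away from zero would, upon extracting a convergent sub-subsequence, yield a non-zero realizable limit at which $s_{\varphi_i}$ vanishes, contradicting the strict sector bound.

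Finally, from $\Delta z_i(k) \to 0$, $\Delta u_i(k) \to 0$, and $\Delta w_i(k) \to 0$, the output equation in \eqref{eq:sigma-tilde-LTI} gives $C_i^{\textrm{opt}} \Delta x_i(k) \to 0$. I would invoke detectability of $(C_i^{\textrm{opt}}, A_i)$ to pick $L_i$ with $A_i - L_i C_i^{\textrm{opt}}$ Schur and rewrite the incremental dynamics as $\Delta x_i^+ = (A_i - L_i C_i^{\textrm{opt}})\Delta x_i + L_i(C_i^{\textrm{opt}} \Delta x_i) + B_i \Delta u_i + G_i \Delta w_i$, concluding via the standard ISS-type bound for a Schur LTI system driven by a vanishing input that $\Delta x_i(k) \to 0$. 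The main obstacle is the extraction step: going from the scalar convergence $s_{\varphi_i}(\Delta z_i, \Delta w_i) \to 0$ to the vectorial convergence $(\Delta z_i, \Delta w_i) \to 0$ requires continuity of $\varphi_i$ and closedness of the set of realizable incremental pairs, which are the implicit regularity assumptions behind the strict sector bound.
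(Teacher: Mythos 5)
Your proof is correct and arrives at the same conclusion, but the second half takes a genuinely different route from the paper. The paper's proof is a LaSalle argument: it defines the compact forward-invariant sublevel sets $\Theta_\beta$ of the summed storage function $V$, lets $E_\beta$ be the largest invariant set on which $V(\Delta x^+)-V(\Delta x)=0$, notes that all inputs and outputs must vanish on $E_\beta$, and uses detectability to conclude $E_\beta=\{0\}$. You instead run a direct limiting argument: monotone convergence of $V(\Delta x(k))$ squeezes both nonpositive terms of the supply to zero, strict negative definiteness of the interconnection matrix forces $\Delta y\to 0$ and hence $\Delta u\to 0$, a subsequence argument upgrades $s_{\varphi_i}\to 0$ to $(\Delta z_i,\Delta w_i)\to 0$, and detectability is used explicitly through an output-injection rewriting $\Delta x_i^+=(A_i-L_iC_i^{\textrm{opt}})\Delta x_i+(\text{vanishing terms})$ with a Schur matrix. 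Your version is more elementary and self-contained -- it avoids invoking LaSalle for the incremental dynamics, which are not autonomous in $\Delta x$ alone, and it makes transparent exactly how detectability is consumed, whereas the paper's one-line appeal to detectability on $E_\beta$ hides the same reasoning. The price is that you must confront head-on the regularity needed to pass from $s_{\varphi_i}(\Delta z_i,\Delta w_i)\to 0$ to $(\Delta z_i,\Delta w_i)\to 0$: this requires boundedness of $\Delta w_i$ along trajectories and closedness of the set of realizable incremental pairs (e.g., via continuity of $\varphi_i$ and a Lipschitz-type component of the sector bound). You flag this honestly; the paper's proof implicitly relies on the same assumptions when it asserts that all inputs and outputs vanish on $E_\beta$, so this is a shared caveat rather than a gap specific to your argument.
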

    \vspace{1em}
\begin{proof}
    Define $V$ as in the proof of Theorem~\ref{thm:general-static-interconnection}. Then $V(\Delta x^+)-V(\Delta x) \leq 0$ if $\Delta u, \Delta w, \Delta y, \Delta z$ are all zero and negative otherwise.

    Let $\beta > 0$ and $\Theta_\beta = \{\Delta x \mid V(\Delta x) \leq \beta\}$. By the above, $\Theta_\beta$ is forward-invariant. Since $V$ is positive definite and radially unbounded, $\Theta_\beta$ is also compact. Define $\Omega_\beta=\{\Delta x \in \Theta_\beta \mid V(\Delta x^+) - V(\Delta x)=0\}$ and let $E_\beta$ be the largest invariant set contained in $\Omega_\beta$.

    Then, by LaSalle's Invariance Principle, all incremental trajectories starting in $\Theta_\beta$ converge to $E_\beta$. Note that all inputs and outputs must be identically zero on $E_\beta$. Hence, the coupling and the nonlinearities disappear, and the detectability assumption allows us to conclude that $E_\beta = \{0\}$.

    Finally, we observe that $\beta$ can be arbitrarily large to prove the claim for any initial condition.
\end{proof}

Sublinear convergence can be proven using Theorem~\ref{prop:general-interconnection-weak} by following the procedure above with slight modifications. That is, we set $\gamma = 1$, verify that each pair $(C_i^\textrm{opt}, A_i)$ is detectable, and derive a strict sector bound for each oracle. Finally, we use the LMI \eqref{eq:general-interconnection-lmi-strict} in place of \eqref{eq:general-interconnection-lmi}.

Strict sector bounds can often be found by varying the constants in non-strict ones. For example, a $\Tilde{\mu}$-strongly convex oracle satisfies a strict sector bound given by \eqref{eq:sector-bound-monotone} if $\mu < \Tilde{\mu}$.

Observe that this analysis approach does not assume that the local algorithms are identical. In particular, they can be chosen or tuned to best suit their corresponding cost functions $f_i$ and their positions in the network.

\section{Comparison with Classical Analysis}

In this section, we compare the conditions for convergence obtained with our method with those from the literature, in the case of distributed gradient descent, through simulations.

Consider distributed gradient descent from Example~\ref{ex:DGD}, for simplicity with $n=1$.
Assume that each local cost function $f_i$ is $\mu$-strongly convex and has $K$-Lipschitz gradients. Then, by \cite{Yuan_Ling_Yin_2016}, the algorithm converges if, in our notation,
\begin{equation}\label{eq:eta-bound}
    \rho < \frac{1}{d_{\textrm{max}}} \quad \textrm{and}  \quad 
    \eta < \frac{2-\rho \lambda_{\textrm{max}}(L)}{K},
\end{equation}
where $d_\textrm{max}$ is the maximum degree of the graph and $\lambda_{\textrm{max}}(L)$ is the largest eigenvalue of $L$. However, the convergence of distributed gradient descent with constant step size is only approximate\footnote{Exact convergence can be achieved with diminishing step sizes.} in the sense that estimates $x_i$ converge to points where the cost error is in $\mathcal{O}(\frac{\eta}{1-\sigma})$, where $\sigma$ is the second largest magnitude of eigenvalues of $I-\rho L$.
Consequently, linear convergence does not hold globally \cite{Yuan_Ling_Yin_2016}. Hence, we aim to use Theorem~\ref{prop:general-interconnection-weak}, noting that detectability is immediate in \eqref{eq:dgd-lti}.

Using the fact that each oracle is $\mu$-strongly monotone and $K$-Lipschitz, we derive incremental sector bounds defined by
\begin{equation}
    S_{\varphi_i} = \begin{bmatrix}
        2K\mu & -K-\mu\\
        -K-\mu & 2
    \end{bmatrix}.
\end{equation}
Note that Theorem~\ref{prop:general-interconnection-weak} requires a strict sector bound, which can, without loss of generality, be imposed by considering an infinitesimal increase in $K$ and decrease in $\mu$. 

Substituting this sector bound and the dynamics \eqref{eq:dgd-lti} into Lemma~\ref{lem:network-diss-LMI}, the LMI
\begin{equation}\label{eq:dgd-local-LMI}
    \begin{bmatrix}
        2 \alpha_i K \mu +Q_i & \rho P_i + S_i & \eta P_i - \alpha_i(K+\mu)\\
        \rho P_i + S_i^\top & -\rho^2 P_i + R_i & -\rho\eta P_i\\
        \eta P_i - \alpha_i(K+\mu) & -\rho \eta P_i & -\eta^2 P_i + 2\alpha_i
    \end{bmatrix}\succeq 0
\end{equation}
has to be feasible with decision variables $P_i, \alpha_i >0$ and $Q_i, S_i, R_i\in\mathbb{R}$ for $i=1,2,\dots, N$, simultaneously with
\begin{equation}\label{eq:Laplacian-global-LMI}
    Q + L S^\top + S L + L R L \prec 0,
\end{equation}
where $Q, S, R$ are defined as in Theorem~\ref{thm:general-static-interconnection}, to conclude contraction by Theorem~\ref{prop:general-interconnection-weak}.

Consider the communication graph depicted in Figure~\ref{fig:graph} and let $\mu=0.05$, $K=1$. Figure~\ref{fig:grid-search-basic} shows the results of simultaneous feasibility tests of \eqref{eq:dgd-local-LMI} and \eqref{eq:Laplacian-global-LMI} over a $\rho,\eta$ grid with a lower bound of $0.001$ and increments of $0.05$. This has been obtained with CVX~\cite{cvx, gb08}.

Our approach is able to identify all parameter choices that are deemed valid by the traditional analysis. We also find an additional region in the $\rho,\eta$ space where contraction is certified. These results were verified for a range of variations of $\mu$, $K$, and the structure of the graph.

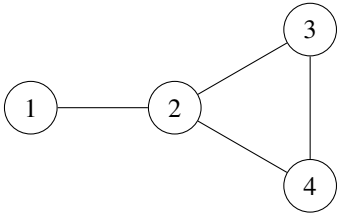
\begin{figure}
    \centering
    \vspace{1.5mm}
    \begin{tikzpicture}[every node/.style={circle, draw, minimum size=0.7cm}]
  \node (2) at (180:1cm) {2};
  
  \node (1) [left=1.4cm of 2] {1};
  \node (3) [right=1.4cm of 2] {3};
  \node (4) [below=0.5cm of 2] {4};

  \draw (2) -- (3) -- (4) -- (2);
  \draw (2) -- (1);
\end{tikzpicture}
    \caption{Communication graph used in the example.}
    \label{fig:graph}
\end{figure}
% LMI satisfaction on parameter grid

\begin{figure}
    \centering
    \begin{subfigure}{0.28\linewidth}
        \centering
        \includegraphics[height=4.75cm, trim={6.5cm 0 6cm 1cm}, clip]{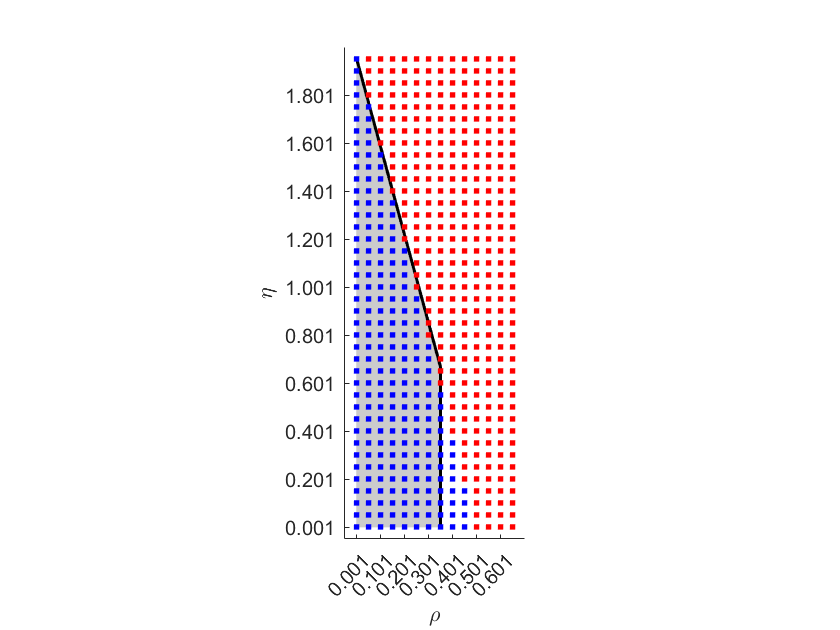}
        \caption{All agents.}
        \label{fig:grid-search-basic}
    \end{subfigure}\hfill\begin{subfigure}{0.4\linewidth}
        \centering
        \includegraphics[height=4.75cm, trim={4.8cm 0 4.8cm 1cm}, clip]{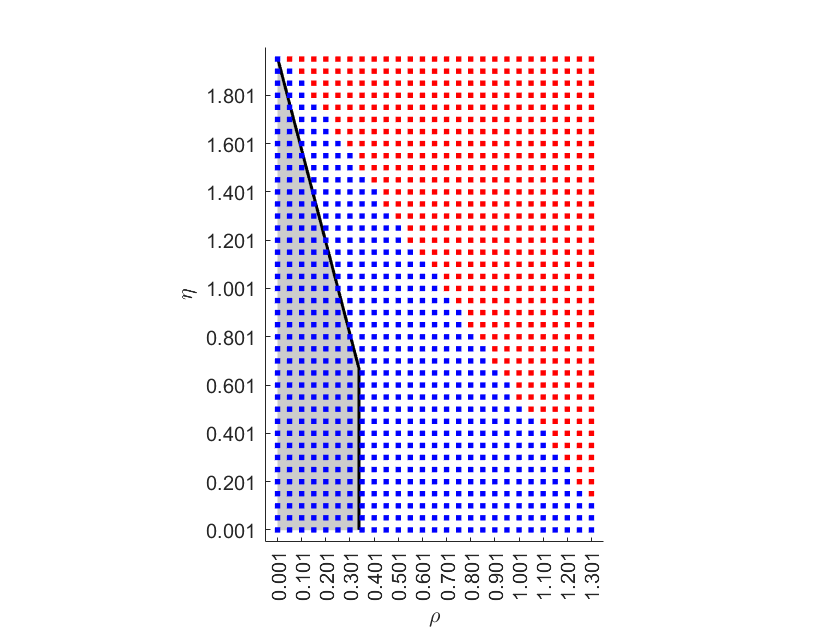}
        \caption{Agent 1.}
        \label{fig:agent-1}
    \end{subfigure}\hfill\begin{subfigure}{0.28\linewidth}
        \centering
        \includegraphics[height=4.75cm, trim={6.5cm 0 6cm 1cm}, clip]{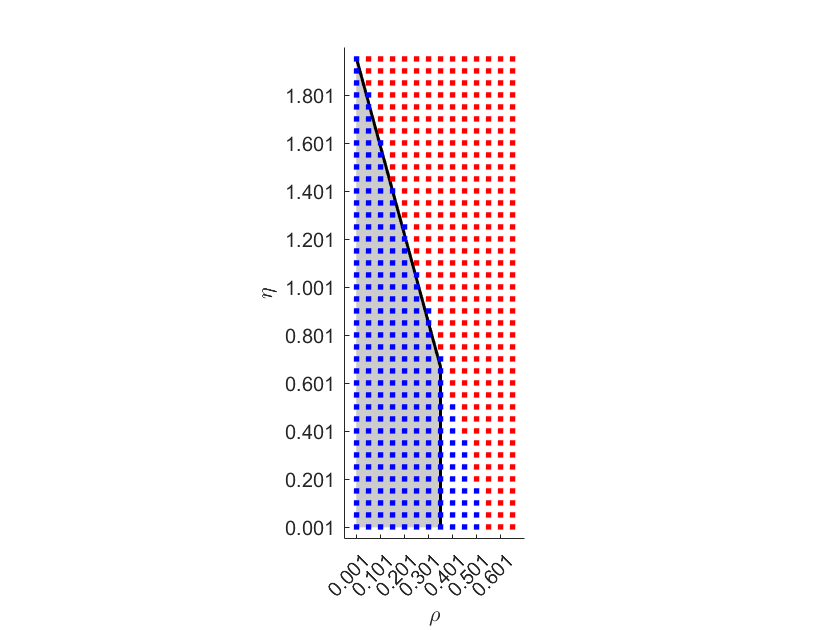}
        \caption{Agent 2.}
        \label{fig:agent-2}
    \end{subfigure}
    \caption{Grid search over $\rho$ and $\eta$, with $\mu=0.05$ and $K = 1$, using the communication graph in Fig.~\ref{fig:graph}. Blue squares show where the LMIs \eqref{eq:dgd-local-LMI} and \eqref{eq:Laplacian-global-LMI} are feasible, while the gray area is where \eqref{eq:eta-bound} hold. In (b) and (c), a single agent is tuned with all other step sizes fixed at $(\rho_i, \eta_i)=(0.35, 0.025)$.}
\end{figure}

\begin{figure}
    \centering
    \vspace{2mm}
    \includegraphics[width=0.85\linewidth]{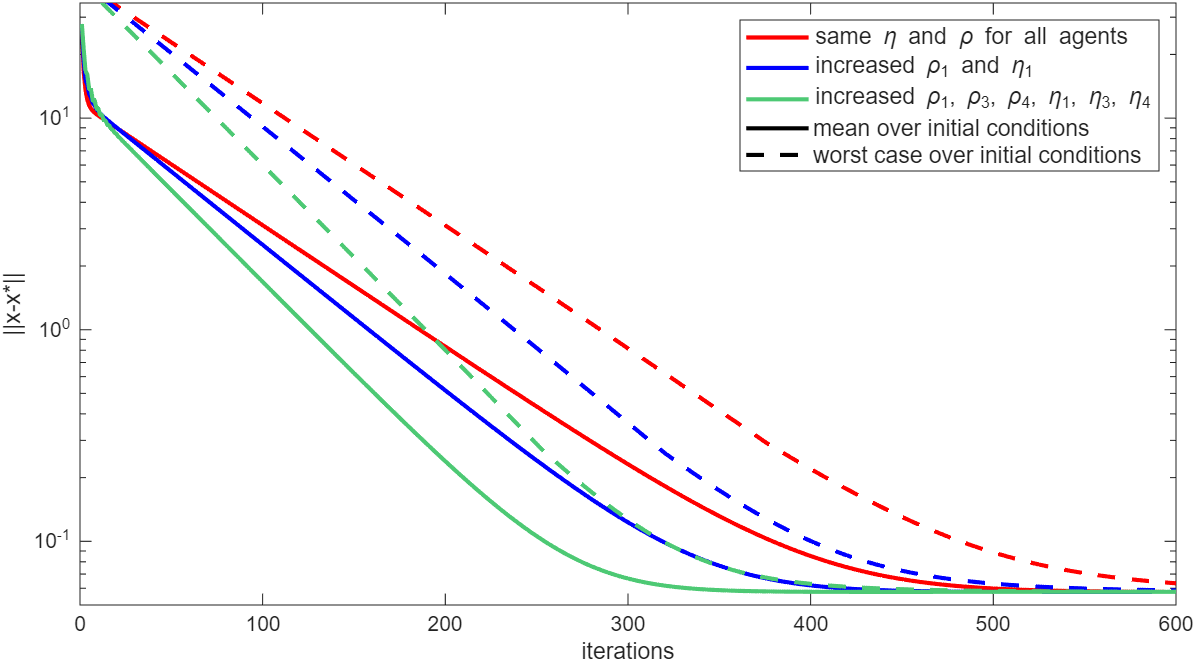}
    \caption{Comparison of logarithmic error over 1000 uniformly random initial conditions from $(-25, 25)^4$. We compare a baseline using $(\rho_i, \eta_i)=(0.35, 0.025)$ at all agents (red); setting $(\rho_1, \eta_1)=(1.05, 0.075)$, while keeping the rest unchanged (blue); and where in addition $(\rho_i, \eta_i)=(0.525, 0.0375)$ for $i=3,4$ (green). }
    \label{fig:error-curve}
\end{figure}
 
A major advantage of our analysis approach is that it allows us to verify contraction of heterogeneous algorithms, where each agent uses different parameters. This is illustrated in Figures~\ref{fig:agent-1} and \ref{fig:agent-2}, which show the step sizes of agents 1 and 2, respectively, which lead to contraction with the other step sizes fixed at $(\rho_i, \eta_i)=(0.35, 0.025)$. 
Observe that for agent 1, which is a leaf in the network, we obtain a significantly larger set of valid parameters. This highlights that the dissipativity approach is able to exploit the network structure and verify or accelerate convergence through heterogeneous step sizes.
In Figure~\ref{fig:error-curve} we consider 
\begin{align*}
    &f_1(x_1)=0.125(x_1-1)^2,  &&f_2(x_2)=0.4(x_2-3)^2, \\
    &f_3(x_3)=0.475(x_3+0.5)^2, &&f_4(x_4)=0.06(x_4-4)^2,
\end{align*}
and compare the error curves for three different sets of parameters. To be precise, contraction is verified for all of these choices using the LMIs~\eqref{eq:dgd-local-LMI} and \eqref{eq:Laplacian-global-LMI}. Observe that heterogeneous tuning leads to faster convergence.

We can similarly exploit differences in the sector bounds across the network in cases where the local cost functions are known to have different properties. This avoids the conservatism of using worst-case parameters at all agents.

\section{Conclusion}
We have introduced a dissipativity-based framework for the analysis of distributed optimization algorithms. This framework yields a systematic pipeline for proving convergence of possibly heterogeneous algorithms with arbitrary network structure, through the notion of contraction, with conditions expressed as linear matrix inequalities.

% sim results
Our simulations suggest that this approach can verify the convergence of distributed gradient descent for all parameter choices that satisfy the classical sufficient conditions and provides additional freedom in tuning the algorithm, by exploiting the network structure.

% future directions: synthesis and control
Promising research directions include utilizing this framework and the theory on dissipativity-based synthesis for the design of new algorithms, as well as applying our methods to optimization-in-the-loop control of distributed systems.

\bibliography{bibliography}
\bibliographystyle{IEEEtran}

\end{document}